\newtheorem{thm}{Theorem}
\crefname{thm}{Theorem}{Theorems}
\newtheorem*{thmm}{Theorem}
\crefname{thm}{Theorem}{Theorems}
\newtheorem*{qst}{Question}
\crefname{qst}{Question}{}
\newtheorem{cor}[thm]{Corollary}
\crefname{cor}{Corollary}{Corollaries}
\newtheorem{lem}[thm]{Lemma}
\crefname{lem}{Lemma}{Lemmas}
\newtheorem{prop}[thm]{Proposition}
\crefname{prop}{Proposition}{Propositions}
\crefname{conj}{Conjecture}{Conjectures}
\crefname{ques}{Question}{Questions}
\theoremstyle{definition}
\crefname{defn}{Definition}{Definitions}
\newtheorem{remark}[thm]{Remark}
\crefname{rem}{Remark}{Remarks}
\newtheorem{ex}[thm]{Example}
\crefname{ex}{Example}{Examples}
\crefname{obs}{Observation}{Observations}
\crefname{claim}{Claim}{Claims}
\crefname{ass}{Assumption}{Assumptions}
\numberwithin{thm}{section}
\renewcommand{\leq}{\leqslant}
\renewcommand{\geq}{\geqslant}
\renewcommand{\le}{\leqslant}
\renewcommand{\ge}{\geqslant}
\renewcommand{\to}{\rightarrow}
\DeclarePairedDelimiter{\norma}{\lvert}{\rvert} 
\DeclarePairedDelimiter{\abs}{\lvert}{\rvert}
\DeclarePairedDelimiter{\tond}{(}{)} 
\DeclarePairedDelimiter{\quadr}{[}{]}
\DeclarePairedDelimiter{\graf}{\{}{\}} 
\DeclarePairedDelimiter{\scal}{\langle}{\rangle}
\DeclareMathOperator*{\law}{law}
\newcommand{\numberset}{\mathbb}
\newcommand{\R}{\numberset{R}}
\newcommand{\hess}{\nabla^2}
\newcommand{\cova}[1]{\mathrm{Cov}_{#1}}
\newcommand{\vari}[1]{\mathrm{Var}_{#1}}
\newcommand{\dive}{\mathbf{\nabla} \cdot}
\newcommand{\expe}[1]{\mathbb{E}\quadr*{#1}}
\DeclareMathOperator{\lsi}{LSI}
\newcommand{\ddt}{\frac{\mathrm{d}}{\mathrm{d}t}}
\newcommand{\themaxt}{\theta_t^{\text{max}}}
\newcommand{\themint}{\theta_t^{\text{min}}}
\newcommand{\oulo}{\log Q_t\tond*{\frac{d\mu}{d\gamma}} }
\newcommand{\eee}{\mathrm{e}}
\begin{document}

\title{Heat flow, log-concavity, and Lipschitz transport maps}
\author{Giovanni Brigati\thanks{\textsf{giovanni.brigati@ist.ac.at}} }
\author{Francesco Pedrotti\thanks{\textsf{francesco.pedrotti@ist.ac.at}} }
\affil{Institute of Science and Technology Austria}

\maketitle

\begin{abstract}
In this paper we derive estimates for the Hessian of the logarithm (log-Hessian) for solutions to the heat equation. For initial data in the form of log-Lipschitz perturbation of strongly log-concave measures, the log-Hessian admits an explicit, uniform (in space) lower bound. This yields a new estimate for the Lipschitz constant of a transport map pushing forward the standard Gaussian to a measure in this class. Further connections are discussed with score-based diffusion models and improved Gaussian logarithmic Sobolev inequalities. 
Finally, we show that assuming only fast decay of the tails of the initial datum does not suffice to guarantee uniform log-Hessian upper bounds.
\end{abstract}

\noindent\textbf{MSC2020:} 26D10 (primary), 39B62, 35K05, 49Q22, 39P15 (secondary).

\smallskip
\noindent \textbf{Keywords:} Heat semigroup, log-Hessian estimates, Lipschitz transport maps, log-concavity, logarithmic Sobolev inequality, score-based diffusion models.

\section{Introduction}

Let $d \geq 1$. We say that a function $V\colon \R^d \to \R\cup\graf*{+\infty}$ is $\alpha$-convex, and that a probability density $\mu \in L^1_+(\R^d)$ is $\alpha$-log-concave, if, respectively, $x\to V(x) -\frac{\alpha}{2}\norma*{x}^2$ is convex, and $\mu (x) =  e^{-V(x)}$ for some $\alpha$-convex function such that $\int_{\R^d} {e}^{-V(x)} dx =1$. In case $\alpha =0$,  $\mu$~is a log-concave probability density; if $\alpha>0$, $\mu$ is strongly log-concave.
We also consider the \emph{heat flow} over $\R^d$:
\begin{equation}
    \label{heat}
    \begin{cases}
    \partial_t f = \frac{1}{2} \, \Delta f, \\
    \lim_{t\to 0} \, f(t,\cdot) = \mu.
    \end{cases}
\end{equation}
Taking $\mu = \delta_0$, the Dirac delta centered in zero, then the \emph{fundamental solution} to \eqref{heat} is $$f(t,x) = \gamma_t(x) := (2\pi t)^{-d/2} \, \mathrm{e}^{{-\norma {x}^2}/2t},$$
where $\gamma_t$ is the isotropic Gaussian density with zero mean and covariance matrix equal to $t I_d.$ 
Any other solution to \eqref{heat} is then given by $\mu * \gamma_t ,$ where $*$ is the symbol of convolution: $(g_1 * g_2) (x) = \int_{\R^d} g_1(x-y) \,g_2(y) \, dy.$ Denote by $(P_t)_t$ the corresponding heat semigroup, i.e.  
\begin{equation} \label{eq:heat-semigroup}
P_t \mu := \mu * \gamma_t, \quad t>0,
\end{equation}
which is induced by the flow of \eqref{heat}.
As solutions to \eqref{heat} are Gaussian convolutions of the initial datum $\mu$, it is expected that those would inherit some features from the Gaussian. There is a vast literature on the subject, which can be roughly classified into three types of results.  

\noindent\emph{(1).} Properties holding as soon as $t>0$. For example, for all $t>0$, $f(t,\cdot)$ is smooth  \cite{evans2022partial}.\\
\emph{(2)}. Asymptotic behaviour, in the limit $t \to \infty$, for which we refer to \cite{bakry1985diffusions,dolbeault2015best,vazquez2017asymptotic}.\\ 
\emph{(3).} Properties which are satisfied by $f(t,\cdot)$ for $t \geq T,$ after a finite time $T>0$. 

\subsection{Log-concavity in finite time}

Observing that the fundamental solution to \eqref{heat} is log-concave for all $t>0$, we pose the following, in the spirit of \emph{(3).}

\begin{qst} Given a probability measure $\mu$ on $\R^d$, does there exist a time $T>0$, such that the solution $f(t,x)$ to \eqref{heat} is log-concave for $t \geq T?$
\end{qst}
In general, we cannot expect \emph{instantaneous} creation of log-concavity, as suggested by the example $\mu = \frac{1}{2} ( \delta(1) + \delta(-1)) \in \mathcal{P}(\R)$, see \cite{brigati2023kinetic}. In addition, some hypotheses on the behaviour at infinity of $\mu$ shall be required, as suggested by \cite{herraiz1999asymptotic}. 
On the other hand, our question has a positive answer in two known cases.
\begin{itemize}
\item If $\mu$ is already log-concave, the solution to \eqref{heat} is log-concave at all times, see  \cite{saumard2014log,prekopa1973logarithmic,leindler1972certain,bra-lie-1976}. Then, by the semigroup property, if a solution to \eqref{heat} is log-concave at a time $T>0$, this property will be propagated to all $t \geq T.$ 
\item If $\mu$ is supported in $B(0,R)$, then $f(t,\cdot)$ is log-concave for all $t \geq R^2$, as pointed out first in \cite{lee2003geometrical}. More precisely, in \cite{bar-goz-mal-zit-2018} it is shown that for all $t>0$
\begin{equation}\label{eq:log-hess-bound-supp}
-\hess\log(\mu*\gamma_t) \succcurlyeq \frac{1}{t}\tond*{1-\frac{R^2}{t}} \,I_d.
\end{equation}
\end{itemize}

One aim of ours is to extend the class of measures for which creation of log-concavity in finite time holds, beyond the compactly supported case, motivated also by the series of papers \cite{ishige2021power,ishige2022new,ishige2023eventual,ishige2024hierarchy}, concerning various concavity property of solutions for the heat flow.   

An analogous question can be posed in the context of functional inequalities satisfied by the Gaussian distribution.  
Starting from the case of compactly supported measures,  previously analysed in \cite{zim-2013, wan-wan-2016,bar-goz-mal-zit-2018}, Chen, Chewi, and Niles-Weed prove in \cite{che-che-wee-2021} that if $\mu$ is subgaussian, i.e. for some $\epsilon, \mathcal{K}>0$

\begin{equation}\label{gautail}
\int_{\R^{d}} \mathrm{e}^{\epsilon \norma{x}^2} \, \mu(dx)  \leq \mathcal K,
\end{equation}
then the solution $\mu_t := f(t,\cdot) \, dx$ to \eqref{heat} satisfies a log-Sobolev inequality, for $t \geq T(\epsilon, \mathcal{K})$. Moreover, the subgaussianity assumption is also necessary. Indeed, if $\mu_T$ satisfies a log-Sobolev inequality for some $T>0$, then $\mu_T$ is also subgaussian \cite[Prop. 5.4.1]{bak-gen-led-2014}, which implies that $\mu$ is subgaussian in the first place. 
On the other hand, strongly log-concave measures do also satisfy a logarithmic Sobolev inequality, see \cite{bakry1985diffusions}. 
Then, one might wonder if \eqref{gautail} would be sufficient for a measure to become log-concave along the heat flow. 
The following theorem implies that this is not the case.
\begin{thm}\label{thm1}
For all non-decreasing function $\Psi\colon \R_{\ge 0}\to \R_{\ge 0}$, there exists an explicit probability measure on $\R$ such that
\begin{itemize}
    \item $\int_\R \mathrm{e}^{\Psi(x)} \mu(dx) < \infty$;
    \item for all $t> 0$, 
    $
        \inf_{x\in \R} \graf*{-\frac{d^2}{dx^2} \log \mu*\gamma_t} = -\infty.
    $

\end{itemize} 
\end{thm}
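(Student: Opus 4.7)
The plan is to exploit the identity, obtained by direct computation,
\[
-\frac{d^2}{dx^2}\log(\mu \ast \gamma_t)(x) = \frac{1}{t} - \frac{\mathrm{Var}_{p_x}(y)}{t^2},
\qquad p_x(dy) \propto \gamma_t(x-y)\,\mu(dy),
\]
which identifies the log-Hessian with $1/t$ minus the scaled posterior variance in the Gaussian denoising channel. Proving unboundedness of the log-Hessian below thus reduces to constructing $\mu$ whose posterior variance is unbounded on $\mathbb R$ for every $t>0$. We may and do assume $\Psi$ is convex and non-decreasing: any non-decreasing $\Psi$ admits a convex non-decreasing majorant $\hat\Psi\ge\Psi$, and integrability against $e^{\hat\Psi}$ implies integrability against $e^\Psi$.

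The proposed measure is the atomic probability
\[
\mu := \sum_{k \geq 1} w_k\,\delta_{a_k},
\qquad a_k := 2^k, \qquad w_k := Z^{-1}\,k^{-2}\,e^{-\Psi(a_k)},
\]
with $Z := \sum_k k^{-2}\,e^{-\Psi(a_k)} < \infty$; the first bullet is then immediate: $\int e^{\Psi(x)}\,\mu(dx) = Z^{-1}\sum_k k^{-2} < \infty$. Under this $\mu$, the posterior mass at the atom $a_j$ reads $p_x(a_j) \propto e^{-F_x(j)}$ with $F_x(j) := \Psi(a_j) + (x-a_j)^2/2t + 2\log j$. A short calculation yields, for the continuous extension $\tilde F_x(s)$ to $s\ge 1$,
\[
\tilde F_x'(s) = 2^s \log 2\,\Bigl[\Psi'(2^s) + \frac{2^s - x}{t}\Bigr] + \frac{2}{s};
\]
since $a \mapsto a + t\Psi'(a)$ is strictly increasing by convexity of $\Psi$, and $\tilde F_x'(1)<0$ for $x$ large while $\tilde F_x'(s)\to+\infty$, one verifies that $\tilde F_x$ is unimodal on $[1,\infty)$, with a unique global minimum at some $s^\ast_x$.

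For each integer $k \geq 1$, let $x_k$ be the unique solution of $F_{x_k}(k) = F_{x_k}(k+1)$; unimodality of $\tilde F_{x_k}$ forces $s^\ast_{x_k} \in (k, k+1)$, whence $F_{x_k}(j) > F_{x_k}(k) = F_{x_k}(k+1)$ strictly for every integer $j \in \mathbb N \setminus \{k, k+1\}$, and the gap diverges as $k\to\infty$. Consequently $p_{x_k}$ concentrates on the pair $\{a_k, a_{k+1}\}$ with asymptotically equal weights, so $\mathrm{Var}_{p_{x_k}}(y) = (1+o(1))(a_{k+1} - a_k)^2/4 = (1+o(1))\cdot 4^{k-1}$, and hence
\[
-\frac{d^2}{dx^2}\log(\mu \ast \gamma_t)(x_k) = \frac{1}{t} - \frac{(1+o(1))\,4^{k-1}}{t^2} \xrightarrow[k\to\infty]{} -\infty,
\]
establishing the second bullet. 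The hardest step is the quantitative control of the gap $\min_{j \neq k,k+1} F_{x_k}(j) - F_{x_k}(k) \to \infty$ as $k \to \infty$: this rests on a uniform lower bound for $\tilde F_{x_k}''(s^\ast_{x_k})$ combined with the resulting quadratic growth of $\tilde F_{x_k}$ away from its minimum, both of which use the convexity of $\Psi$ in an essential way.
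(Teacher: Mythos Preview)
Your proposal starts from the same identity as the paper (the posterior-variance representation \eqref{eq:prob-rep-log-hess}) and builds an atomic measure with weights $\propto k^{-2}e^{-\Psi(a_k)}$, exactly as the paper does with atoms at $x_i=i(i+1)/2$; the choice $a_k=2^k$ versus triangular numbers is inessential. The divergence is in how you force the posterior variance to be large.

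Two steps in your sketch are not justified. First, the unimodality of $\tilde F_x$: the monotonicity of $a\mapsto a+t\Psi'(a)$ only tells you that the bracket $[\Psi'(2^s)+(2^s-x)/t]$ in $\tilde F_x'(s)$ changes sign once, but after multiplying by $2^s\log 2$ and adding $2/s$ the resulting function need not be monotone on the region where the bracket is negative, so a single zero of $\tilde F_x'$ does not follow from what you wrote. Second, the ``hardest step'' is only asserted. When $\Psi$ grows fast, one has $x_k=\tfrac32\,2^k+t\,2^{-k}\bigl(\Psi(2^{k+1})-\Psi(2^k)\bigr)+o(1)$, which can be far to the right of $\tfrac32\,2^k$; showing $F_{x_k}(j)-F_{x_k}(k)\to\infty$ for $j<k$ then requires a genuine cancellation between the $\Psi$-increment and the quadratic term (convexity does make this work, but you have to do the computation---a second-derivative bound at $s^\ast_{x_k}$ alone does not obviously control the discrete tail sum).

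The paper bypasses all of this. Fixing $t>0$ and a gap index $j$, it sets
\[
F(z)=\sum_{i<j}\tfrac{1}{(i+1)^2}e^{-\Psi(x_i)+zx_i/t-x_i^2/2t}-\sum_{i\ge j}\tfrac{1}{(i+1)^2}e^{-\Psi(x_i)+zx_i/t-x_i^2/2t},
\]
checks $F(0)\ge 0$ (using only monotonicity of $\Psi$ and $\sum_{i\ge1}(i+1)^{-2}<1$) and $F(z)\to-\infty$ as $z\to\infty$ (a two-term comparison). By the intermediate value theorem there is $z$ with $\mu_{z,t}\bigl([0,x_{j-1}]\bigr)=\mu_{z,t}\bigl([x_j,\infty)\bigr)=\tfrac12$, which immediately gives $\mathrm{Var}_{\mu_{z,t}}\gtrsim (x_j-x_{j-1})^2=j^2$. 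No convex majorant of $\Psi$, no unimodality, no concentration estimate: the IVT locates the right $z$ without ever analysing where the posterior actually sits. I would recommend replacing your concentration argument with this device.
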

\begin{remark}
    Similar conclusions hold in arbitrary dimension, as it can be seen by considering the product probability measure $ \mu \times \delta_0 \times \ldots \times \delta_0$, with $\mu$ given by Theorem \ref{thm1}.
\end{remark}
Our result shows that the creation of log-concavity cannot be guaranteed by assuming only some control on the tails of the distributions $\mu$.
Therefore, we restrict our analysis to a perturbation regime, i.e.~we take measures $\mu$ which are close to being strongly log-concave, and we show that they become log-concave after a finite time along \eqref{heat}. 
More precisely, we prove the following
\begin{thm}\label{thm:log-lip-pert-log-conc-become-log-conc}
       Suppose that $\mu = \eee^{-(V+H)}\in L^1_+(\R^d)$, where $V\colon \R^d \to \R\cup\graf*{+\infty}$ is $\alpha$-convex and $H\colon\R^d\to\R$
        is $L$-Lipschitz for some $\alpha \in \R$, $L\ge0$.
        Then for every $t>0$ such that $\alpha t+1>0$ we have
        \begin{equation}\label{eq:log-hess-bound-conv-log-lip-pert}
            \frac{1}{t}\quadr*{1-\frac{1}{t}\tond*{\frac{L}{\alpha+\frac{1}{t}} + \sqrt{\frac{1}{\alpha+\frac{1}{t}}}}^2} I_d \preccurlyeq -\hess {\log(\mu*\gamma_t)} \preccurlyeq \frac{1}{t}I_d.
        \end{equation}
        In particular, for $\alpha>0$ and $t\ge \tond*{\frac{L}{\alpha} + \sqrt{\frac{1}{\alpha}}}^2$, we have that $\mu*\gamma_t$ is strongly log-concave.
\end{thm}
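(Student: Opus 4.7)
The plan is to compute $-\hess\log(\mu*\gamma_t)(x)$ as $\tfrac{1}{t}I_d$ minus the covariance of an auxiliary tilted probability measure on $\R^d$, and then to bound this covariance via a synchronous Langevin coupling against a strongly log-concave reference. Writing $(\mu*\gamma_t)(x)=(2\pi t)^{-d/2}\int_{\R^d}\eee^{-V(y)-H(y)-|x-y|^2/(2t)}\,\dd y$ and introducing the probability measure $\nu_{t,x}(\dd y)\propto \eee^{-V(y)-H(y)-|x-y|^2/(2t)}\dd y$, the classical log-partition identity (two derivatives in $x$ of $\log\int \eee^{-\phi(x,y)}\dd y$) yields
\[
-\hess\log(\mu*\gamma_t)(x)=\tfrac{1}{t}I_d-\tfrac{1}{t^2}\cov(\nu_{t,x}),
\]
so the upper bound $-\hess\log(\mu*\gamma_t)\preccurlyeq \tfrac{1}{t}I_d$ is immediate from $\cov(\nu_{t,x})\succcurlyeq 0$.

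The main step is to prove that, for $\beta:=\alpha+\tfrac{1}{t}>0$,
\[
\cov(\nu_{t,x})\preccurlyeq \bigl(\tfrac{L}{\beta}+\tfrac{1}{\sqrt\beta}\bigr)^{2} I_d,
\]
which, once plugged into the identity above, gives exactly the lower bound in \eqref{eq:log-hess-bound-conv-log-lip-pert}. The quadratic term $\tfrac{|x-y|^2}{2t}$ reinforces the convexity of $V$: the auxiliary measure $\nu_0\propto \eee^{-V(y)-|x-y|^2/(2t)}$ (obtained by setting $H\equiv 0$) is $\beta$-strongly log-concave, and Brascamp--Lieb gives $\cov(\nu_0)\preccurlyeq \tfrac{1}{\beta}I_d$. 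To transfer the bound from $\nu_0$ to $\nu_{t,x}$, I would run two overdamped Langevin SDEs with stationary measures $\nu_{t,x}$ and $\nu_0$ driven by the same Brownian motion from a common starting point. Using $\beta$-convexity of the shared part of the drift together with $|\nabla H|\le L$,
\[
\tfrac{\dd}{\dd s}|Y_s-\tilde Y_s|\le -\beta\,|Y_s-\tilde Y_s|+L,
\]
hence $|Y_s-\tilde Y_s|\le L/\beta$ for all $s\ge 0$. Extracting a subsequential weak limit of the joint laws as $s\to\infty$ produces a coupling $\pi$ of $\nu_{t,x}$ and $\nu_0$ supported on $\{|y-\tilde y|\le L/\beta\}$.

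Given this coupling, for any unit vector $v\in\R^d$ the variational characterisation of the variance combined with the triangle inequality in $L^2(\pi)$ gives
\[
\vari{\nu_{t,x}}\!\bigl(\scal{v,\cdot}\bigr)^{1/2}\le \bigl\|\scal{v,Y-\tilde Y}\bigr\|_{L^2(\pi)}+\vari{\nu_0}\!\bigl(\scal{v,\cdot}\bigr)^{1/2}\le \tfrac{L}{\beta}+\tfrac{1}{\sqrt{\beta}},
\]
which yields the operator-norm bound on $\cov(\nu_{t,x})$ and hence \eqref{eq:log-hess-bound-conv-log-lip-pert}. For strong log-concavity when $t\ge\bigl(\tfrac{L}{\alpha}+\sqrt{\tfrac{1}{\alpha}}\bigr)^{2}$ and $\alpha>0$, the fact that $\beta>\alpha$ forces $\tfrac{L}{\beta}+\tfrac{1}{\sqrt\beta}<\tfrac{L}{\alpha}+\tfrac{1}{\sqrt\alpha}\le\sqrt{t}$, so the bracketed factor is strictly positive. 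The main technical obstacle is that $V$ and $H$ need not be smooth, so the Langevin coupling is not immediately well-defined; I would handle this by approximating $V$ and $H$ by their heat regularisations (which preserve $\alpha$-convexity and the $L$-Lipschitz bound respectively), applying the coupling in the smooth case, and passing to the limit using stability of the covariance bound under weak convergence of the regularised measures.
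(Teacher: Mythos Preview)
Your proposal is correct and follows essentially the same route as the paper: both use the covariance representation \eqref{eq:prob-rep-log-hess}, bound $\cov(\nu_0)$ via Brascamp--Lieb for the $(\alpha+\tfrac{1}{t})$-log-concave reference, and then pass to $\cov(\nu_{t,x})$ by the triangle-inequality estimate on standard deviations (your displayed inequality is exactly the paper's Lemma~\ref{lem:vari-w2-close-measure}). The only difference is that the paper obtains the coupling bound $W_\infty(\nu_{t,x},\nu_0)\le L/\beta$ by quoting \cite[Corollary~2.4]{khu-maa-ped-2024}, whereas you re-derive it via the synchronous Langevin coupling (which is in fact a standard way to prove that corollary); your regularisation remark adequately covers the non-smooth case.
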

Equation \eqref{eq:log-hess-bound-conv-log-lip-pert} goes beyond the problem of log-concavity, yielding interesting consequences, as explained in the next subsections. 

\subsection{Application to Lipschitz transport maps}
\label{sec:lip-transp-map-intro}
In a seminal paper \cite{caf-2000}, Caffarelli showed that the Brenier map \cite{brenier1991polar} from optimal transport between the standard Gaussian $\gamma$ and an $\alpha$-log-concave probability measure $\mu$ is $(1/\sqrt{\alpha})$-Lipschitz. This result is useful because Lipschitz transport maps transfer functional inequalities (including isoperimetric, log-Sobolev and Poincar\'{e} inequalities) from a probability measure to another one, and it is typically much easier to prove these inequalities for the Gaussian measure in the first place.
For example, suppose that a probability measure $\mu$ satisfies the log-Sobolev inequality $\lsi(C)$ for some $C>0$, i.e. for all regular enough probability measures $\rho\ll \mu$

\begin{equation}\label{LSIc}\tag{LSI(C)} 
    \int \frac{d\rho}{d\mu} \log \frac{d\rho}{d\mu} \, d\mu  \le 2C \int \left|\nabla \sqrt{d\rho/d\mu} \right|^2 \, d\mu,
\end{equation}
where the two sides of the inequalities go under the name of \emph{relative entropy} and \emph{relative Fisher information}, respectively. Suppose, furthermore, that $T\colon \R^d\to\R^d$ is $L$-Lipschitz and consider the pushforward probability measure $\nu \coloneqq T\# \mu$. Then, $\nu$ satisfies $\lsi(L^2\cdot C)$.
Therefore, Caffarelli's result (together with the Gaussian LSI \cite{gross1975logarithmic}) immediately implies that strongly $\alpha$-log-concave probability densities satisfy $\lsi(1/\alpha)$, recovering the celebrated result by Bakry and \'Emery \cite{bakry1985diffusions}. Further details and many more applications of Lipschitz transport maps are discussed in \cite{mik-she-2023,cor_era-2002} and the references therein.

More recently, Kim and Milman \cite{kim-mil-2012} generalized Caffarelli's result by constructing another transport map, which is obtained by reverting an appropriate heat flow, and is referred to as the \emph{heat-flow map} (notation: $T^{\text{flow}}$). Other Lipschitz estimates for this transport map were then provided in \cite{mik-she-2023}, where the authors considered different types of assumptions on the target measure $\nu$ (namely, measures that satisfy a combination of boundedness and (semi-)log-concavity and some Gaussian convolutions). 

Several works dealt with the study of Lipschitz transport maps \cite{kla-put-2021,col-fat-2021, dai-gao-hua-jia-kan-liu-2023,mik-she-2021, she-2024,col-fig-jha-2017, car-fig-san-2024, che-poo-2023}; the recent paper \cite{fat-mik-she-2024} in particular considers an analogous class of target measure as in the present contribution. For comparison, we recall below its main result in the Euclidean setting.
\begin{thmm}[\cite{fat-mik-she-2024}, Theorem 1]
    Let $\mu=e^{-(V+H)},\, \nu =e^{-V}$ be  probability densities on $\R^d$ such that for all $x\in \R^d$ we have 
    $$
         \norma*{\nabla H}\le L,
        \qquad
         \hess V(x) \ge \alpha I_d,
        \qquad
         \abs*{\nabla^3 V(x)(w,w)}  \le K \qquad \text{ for all } w \in \mathbb{S}^{d-1},$$
    for some $\alpha>0, \, L, \,K\ge 0$.
    Then, there exists a transport map $T\colon \R^d\to \R^d$ such that $T\#\nu = \mu$ and $T$ is $\exp\tond*{\frac{5L^2}{\alpha}+\frac{5\sqrt{\pi}L}{\sqrt{\alpha}}+\frac{LK}{2\alpha^2}}$-Lipschitz.
\end{thmm}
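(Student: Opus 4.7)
My plan is to construct $T$ as the time-$1$ flow of an ODE along the one-parameter interpolation
\[
\mu_s := Z_s^{-1}\,\eee^{-(V+sH)}, \qquad s\in[0,1], \qquad Z_s:=\int_{\R^d}\eee^{-(V+sH)}\,dx,
\]
between $\mu_0=\nu$ and $\mu_1=\mu$. A short computation gives $\partial_s\log\mu_s = \expewr{\mu_s}{H}-H$, and since $\mu_s$ is a Lipschitz log-perturbation of the strongly log-concave $\eee^{-V}$ its Langevin generator $L_s := \Delta-\nabla(V+sH)\cdot\nabla$ has a spectral gap, so the Poisson equation $L_s\phi_s = H - \expewr{\mu_s}{H}$ admits a unique centered solution $\phi_s$. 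The gradient field $v_s:=\nabla\phi_s$ then satisfies the continuity equation $\partial_s\mu_s+\nabla\cdot(\mu_s v_s)=0$, so the flow $\dot X_s = v_s(X_s)$, $X_0=x$, defines a map $T(x):=X_1(x)$ with $T\#\nu=\mu$. Gronwall on the Jacobian $DX_s$ reduces the theorem to a uniform pointwise bound on $\|\hess\phi_s\|_\infty$:
\[
\lip(T)\le \exp\tond*{\int_0^1 \|\hess\phi_s\|_\infty\,ds}.
\]

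To bound $\|\hess\phi_s\|_\infty$, I would use the semigroup representation $\phi_s = -\int_0^\infty P_t^s(H-\expewr{\mu_s}{H})\,dt$, with $(P_t^s)_t$ the Markov semigroup generated by $L_s$, and estimate $\hess P_t^s H$ pointwise before integrating in $t$. The main obstacle is that $V+sH$ is generally \emph{not} strongly convex (only $V$ is), so Bakry--\'Emery is not directly available for $L_s$. I would handle this perturbatively: write $L_s = L_0 - s\nabla H\cdot\nabla$, exploit the $\alpha$-curvature of the reference semigroup generated by $L_0$, and transfer gradient and Hessian bounds to $P_t^s$ via Duhamel and the commutators $[\nabla,L_s]$ and $[\hess,L_s]$.

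Since $H$ is merely $L$-Lipschitz, a Bakry--\'Emery-type bound yields $\|\nabla P_t^s H\|_\infty \lesssim L\,\eee^{-\alpha t}$, while $\|\hess P_t^s H\|_\infty$ picks up an additional $t^{-1/2}$ short-time smoothing factor. Integrating in $t$ produces the Gaussian-type integral $\int_0^\infty t^{-1/2}\eee^{-\alpha t}\,dt = \sqrt{\pi/\alpha}$, accounting for the $\sqrt{\pi}L/\sqrt{\alpha}$ summand; a once-iterated gradient estimate in which the perturbation $s\nabla H\cdot\nabla$ feeds back into itself yields the $L^2/\alpha$ term; and the commutator $[\hess,L_s]$ carries a factor of $\nabla\hess(V+sH)$, controlled precisely by $K$, whose time integral against the smoothing kernel produces the $LK/\alpha^2$ summand. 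Summing these contributions and integrating in $s\in[0,1]$ gives the exponent claimed in the theorem.
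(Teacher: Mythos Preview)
This theorem is not proved in the present paper at all: it is quoted verbatim from \cite{fat-mik-she-2024} purely for the purpose of comparison with the paper's own Theorem~\ref{thm3}. There is therefore no proof in the paper to compare your proposal against. The only information the paper gives about the method of \cite{fat-mik-she-2024} is the remark following Theorem~\ref{thm3}: that it ``employs a construction based on reverting the overdamped Langevin dynamics targeting the measure $\nu=\eee^{-V}$'' and uses estimates for the corresponding semigroup (their Proposition~2).

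That said, your proposal does \emph{not} match that description. The construction in \cite{fat-mik-she-2024}, as summarized here, runs the Langevin diffusion with drift $-\nabla V$ from $\mu$ toward its invariant measure $\nu$ over infinite time and reverses it; the Lipschitz bound then comes from Hessian estimates on $\log P_t^V(d\mu/d\nu)$ along that semigroup. Your scheme instead interpolates linearly in the log-density, $\mu_s\propto\eee^{-(V+sH)}$, and transports via the vector field solving an $s$-dependent Poisson equation on $[0,1]$. These are genuinely different interpolations: one is a diffusion time-reversal, the other a static deformation of the potential. Your route is natural and in the spirit of other flow-map constructions, but it is not what \cite{fat-mik-she-2024} does, and the paper gives no indication that the specific constants $5,\,5\sqrt{\pi},\,1/2$ fall out of your computation.

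On the technical side, there is a gap to flag in your sketch. Since $H$ is assumed only Lipschitz, the potential $V+sH$ need not be $C^2$, so the generator $L_s$ and the commutator $[\hess,L_s]$ are not a priori well-defined, and your claim that this commutator is controlled by $\nabla^3(V+sH)$ breaks down (there is no bound, and indeed no assumption, on $\nabla^3 H$). To make the argument rigorous you would have to run the Duhamel expansion entirely around the reference semigroup generated by $L_0=\Delta-\nabla V\cdot\nabla$, treating the full drift perturbation $s\nabla H\cdot\nabla$ as the remainder; then only $\nabla^3 V$ (bounded by $K$) enters the commutator, and $\nabla H$ (bounded by $L$) enters the perturbation. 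This is consistent with your final heuristic accounting, but it is not what you wrote.
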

\noindent Since Lipschitz transport maps can be composed, this result (combined with Caffarelli's theorem \cite{caf-2000}) implies in particular the existence of transport map $\tilde T$ such that $\tilde T \#\gamma = \mu$ and $\tilde{T}$ is Lipschitz with constant 
\begin{equation}\label{lip2}
  \frac{1}{\sqrt{\alpha}}\exp\tond*{\frac{5L^2}{\alpha}+\frac{5\sqrt{\pi}L}{\sqrt{\alpha}}+\frac{LK}{2\alpha^2}}.  
\end{equation}

On the other hand, we will prove in Section \ref{sec3} that our Theorem \ref{thm:log-lip-pert-log-conc-become-log-conc} implies new upper bounds on the Lipschitz norm for the heat-flow map from $\gamma$ to $\mu$.

\begin{thm}\label{thm3}
     Let $\mu = e^{-(V+H)}\in L^1_+(\R^d)$ be a probability density on $\R^d$ such that $V$ is $\alpha$-convex for $\alpha>0$ and $H$ is $L$-Lipschitz for $L\ge 0$. Then, there exists a map $T^\text{flow}\colon \R^d\to \R^d$ such that $T^\text{flow}\#\gamma = \mu$ and $T^\text{flow}$ is 
     $\frac{1}{\sqrt{\alpha}}\exp\tond*{\frac{L^2}{2\alpha}+2\frac{L}{\sqrt{\alpha}}}$-Lipschitz.
\end{thm}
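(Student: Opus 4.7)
The plan is to combine Theorem~\ref{thm:log-lip-pert-log-conc-become-log-conc} with the Kim--Milman construction of the heat-flow transport map \cite{kim-mil-2012,mik-she-2023}. That construction produces a map $T^{\text{flow}}\colon\R^d\to\R^d$ with $T^{\text{flow}}\#\g=\mu$ by reversing the Ornstein--Uhlenbeck flow interpolating between $\mu$ and $\g$. The Jacobian of $T^{\text{flow}}$ obeys a matrix Riccati equation driven by the log-Hessian $-\hess\log(\mu*\g_t)$, and integration of this ODE yields the now-standard bound
\[
    \log \mathrm{Lip}(T^{\text{flow}}) \,\le\, \tfrac{1}{2}\int_0^\infty \left[\tfrac{1}{1+t} - \lambda_{\min}(t)\right] dt,
\]
where $\lambda_{\min}(t) := \lambda_{\min}\!\bigl(-\hess\log(\mu*\g_t)\bigr)$. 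Two sanity checks: when $\mu=\g$ the integrand vanishes, and for a purely $\alpha$-log-concave $\mu$ the Hessian bound $\lambda_{\min}(t)\ge \alpha/(1+\alpha t)$ integrates to $-\tfrac{1}{2}\log\alpha$, recovering Caffarelli's constant $1/\sqrt{\alpha}$.

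From here, the plan is to insert the lower bound on $\lambda_{\min}(t)$ from Theorem~\ref{thm:log-lip-pert-log-conc-become-log-conc}: with the shorthand $u=u(t):=1/(\alpha+1/t)$, that bound reads $\lambda_{\min}(t) \ge \frac{1}{t}\bigl[1 - \frac{1}{t}(Lu+\sqrt{u})^2\bigr]$. Expanding $(Lu+\sqrt{u})^2 = L^2u^2 + 2Lu^{3/2} + u$ and using the partial-fraction identity $\tfrac{1}{t} - \tfrac{u}{t^2} = \tfrac{\alpha}{1+\alpha t}$, one obtains
\[
    \tfrac{1}{1+t} - \lambda_{\min}(t) \,\le\, \left[\tfrac{1}{1+t} - \tfrac{\alpha}{1+\alpha t}\right] + \tfrac{1}{t^2}\bigl(L^2u^2 + 2Lu^{3/2}\bigr).
\]
The first bracket integrates to $-\log\alpha$, and the change of variable $du/u^2 = dt/t^2$ (with $u$ running from $0$ to $1/\alpha$) reduces the second piece to $\int_0^{1/\alpha}(L^2 + 2L/\sqrt{u})\, du = L^2/\alpha + 4L/\sqrt{\alpha}$. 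Dividing by $2$ and exponentiating yields the claimed Lipschitz bound $\tfrac{1}{\sqrt{\alpha}}\exp\!\bigl(\tfrac{L^2}{2\alpha} + 2L/\sqrt{\alpha}\bigr)$.

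The main obstacle I anticipate is technical rather than conceptual: justifying the integral formula for $\log\mathrm{Lip}(T^{\text{flow}})$ rigorously under the limited regularity of $\mu$ allowed by the hypotheses. This can be handled by approximating $\mu$ with smooth, strongly log-concave densities $\mu_\varepsilon$ for which the Kim--Milman construction is classical, deriving the bound above uniformly in $\varepsilon$ (its constants depend only on $\alpha$ and $L$), and then passing to the limit via compactness of uniformly Lipschitz transport maps and stability of pushforwards under weak convergence.
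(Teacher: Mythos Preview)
Your proposal is correct and follows essentially the same route as the paper. The paper passes through the Ornstein--Uhlenbeck parametrization (its Corollary~\ref{cor:ou-log-hess}) and then integrates $\theta_t^{\max}$ via Lemma~\ref{lem:tec-exis-lip-map}, performing the substitution $\tau=e^{2t}-1$; you instead state the Lipschitz bound directly in heat-semigroup time, which amounts to the same change of variable carried out up front (your $t$ is the paper's $\tau$, and after that substitution the integrands coincide term by term). Your further substitution $u=t/(1+\alpha t)$ is a slightly cleaner way to evaluate the $L$-dependent pieces than the paper's direct computation, but the content is identical.
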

\begin{remark}
    Consider the  case where $d=1$,  $V(x) = \frac12 {x}^2$ and $H(x) = L\abs*{x}+\log(Z)$ for a normalizing constant $Z$, so that the assumptions of Theorem \ref{thm3} are satisfied with $\alpha =1$. Then, it was observed in \cite{fat-mik-she-2024} that the Lipschitz norm of any  map $T$ such that $T\#\gamma = \mu$ is at least $\eee^{\frac{L^2}{2}}$. Hence, the dependence on $L^2$ in Theorem \ref{thm3} is sharp.
\end{remark}
The estimate for the Lipschitz constant of $T^\text{flow}$ in Theorem \ref{thm3} improves in particular on the value in  \eqref{lip2}, yielding the best available bound in this setting. Moreover, Theorem \ref{thm3} does not need any assumption on $\nabla^3 V$.

On the technical side, in Theorem \ref{thm3} we transport directly $\gamma$ to $\mu$ via the heat-flow map, and our proof only exploits elementary log-Hessian estimates for the heat semigroup, as in Theorem \ref{thm:log-lip-pert-log-conc-become-log-conc}. On the other hand, \cite{fat-mik-she-2024} employs a construction based on reverting the overdamped Langevin dynamics targeting the measure $\nu = \mathrm e^{-V}$: this requires estimates for the corresponding semigroup (cf. \cite[Proposition 2]{fat-mik-she-2024}), which is less explicit and needs more sophisticated arguments.
We remark that the results of \cite{fat-mik-she-2024} are of independent interest, due to the construction of a Lipschitz map transporting $\nu$ to $\mu$ therein, and the extension to some non-Euclidean spaces.

\subsection{Score-based diffusions models and the Gaussian LSI}

To further motivate our results, we briefly describe here two more applications of Theorem \ref{thm1} and \ref{thm:log-lip-pert-log-conc-become-log-conc}.
\paragraph{Score-based diffusion models} A similar construction as in Section \ref{sec:lip-transp-map-intro}, based on
reverting an ergodic diffusion process, has also recently found application in the machine learning community, within the framework of score-based diffusion models \cite{son-soh-kin-kum-erm-poo-2021,ho-jai-abb-2020}. Let $\mu$ be a probability measure, from which we want to generate random samples.
Consider the Ornstein--Uhlenbeck process (initialized at $\mu$)
\[
    X_0\sim \mu, \qquad dX_t = -X_tdt +\sqrt{2}dB_t,
\]
and denote by $Q_t$ the associate semigroup, i.e.
\begin{equation}\label{eq:ou-semigroup}
    Q_tf(x) = \int f\tond*{e^{-t}x+\sqrt{1-e^{-2t}}} \gamma(x) \,dx. 
\end{equation}
The key observation is that this process can be reverted, i.e. for $T_1>0$ the \emph{reverse SDE}
\begin{equation}
\label{eq:reverse-process}
    Y_0 \sim \law\tond*{X_{T_1}}, \qquad dY_t = -Y_t \,dt + 2\nabla  \oulo (Y_t) dt  +\sqrt{2}dB_t
\end{equation}
is such that $Y_{T_1}\sim \mu$, see \cite{and-1982, cat-con-gen-leo-2023, son-soh-kin-kum-erm-poo-2021}. Therefore, one can simulate the process $(Y_t)_t$ until time $T_1$ to  sample from $\mu$. A common assumption in theoretical works aimed at analysing this method is some control on the  Lipschitz constant of $\nabla \oulo$  \cite{che-che-lee-li-lu-sal-2023,che-che-lli-li-sal-zha-2023,che-lee-jia-2022} or on the one-sided one \cite{kwo-fan-lee-2022, ped-maa-mon-2023}. These assumptions are indeed useful to control the discretization errors when employing a numerical scheme to simulate the process or some sort of ``contractivity'' along the reverse dynamics.
On the one hand, Theorem \ref{thm:log-lip-pert-log-conc-become-log-conc} enlarges the class of distributions $\mu$ for which these assumptions can be justified, by implying bounds on the Hessian $\hess \oulo$ (cf. Corollary \ref{cor:ou-log-hess}), beyond the setting where the initial distribution $\mu$ has bounded support. 

On the other hand, Theorem \ref{thm1} shows that, for some distributions $\mu$, such assumptions can be too restrictive. Thus, complementary analysis is needed, as done  in
\cite{con-dur-sil-2023,ben-deb-dou-del-2023, che-lee-jia-2022}.
\paragraph{Improvements in the Gaussian LSI} 

The standard Gaussian measure $\gamma$ satisfies $\mathrm{LSI}(1)$. Henceforth, let $\nu\ll \gamma$ be a probability measure, and set $u^2 \coloneqq \frac{d \nu}{d \gamma} \in L^1(\gamma)$: then
\begin{equation}
    \label{LSIg}\tag{$\gamma$-LSI}
    \int_{\R^d} |\nabla u|^2 \, d\gamma - \frac{1}{2} \, \int_{\R^d} u^2 \, \log u^2 \, d\gamma \, \geq 0, \quad \text{ if } u \in \mathrm{H}^1(d\gamma).
\end{equation}
The Gaussian logarithmic Sobolev inequality was written first in \cite{gross1975logarithmic}, although it can be deduced from \cite{shannon1948mathematical}. 
The related literature is wide: see \cite{brigati2023stability} for a recent review, and \cite{rezakhanlou2008entropy,carlen1991superadditivity} for accurate historical comments. The constant $C=1$ is optimal, with extremal 
probability measures belonging to $\mathcal{M} \coloneqq \{\nu_{a,b} \coloneqq \mathrm{e}^{a+\scal{b, x}} \, \gamma$, \, $a \in \R, \, b \in \R^d$\}, according to \cite{carlen1991superadditivity}. Then, one may investigate whether the constant in the Gaussian LSI can be improved on a subclass of measures $\nu$, under orthogonality constraints. Contributions in this direction appear in \cite{fathi2016quantitative,brigati2023stability}, and they are closely related to \emph{stability inequalities}, for which the reader may refer to \cite{dolbeault2022sharp,dolbeault2024short,brigati2023logarithmic,brigati2023gaussian,ind-kim-2021}, and references quoted therein.   

In \cite[Theorem 1]{brigati2023stability}, an improved Gaussian LSI is shown: for all $\epsilon, \mathcal{K}>0$, there exists a constant $\eta(\epsilon,\mathcal{K})>0$,  such that for all probability measures $\nu =u^2 \, \gamma$ satisfying $\int_{\R^d} x \, \nu =0$, and  \eqref{gautail}, we have 
\begin{equation}
    \label{stab}
  \int_{\R^d} |\nabla u|^2 \, d\gamma - \frac{1}{2} \, \int_{\R^d} u^2 \, \log u^2 \, d\gamma \, \geq \eta \int_{\R^d} |\nabla u|^2 \, d\gamma, \qquad \text{if } u \in \mathrm{H}^1(d\gamma).
\end{equation}  
The core of the proof for \eqref{stab} is showing that -- after a finite time $T>0$ -- the solution $f(t,\cdot) = \nu * \gamma_t$ to \eqref{heat}, starting at $\nu$, satisfies a Poincar\'e inequality:  
$$\int_{\R^d} \varphi(x)^2 \, f(t,x) \, dx - \left( \int_{\R^d} \varphi(x) \, f(t,x) \, dx \right)^2  \leq C_P \, \int_{\R^d} |\nabla \varphi(x)|^2 \, f(t,x) \, dx,$$
for all functions $ \varphi \in \mathrm{H}^1(f(t,\cdot) \, dx),$ and some constant $C_P >0$. Condition \eqref{gautail} guarantees such a Poincar\'e inequality in finite time, see \cite{che-che-wee-2021}.
Then, \cite[Theorem 1]{fathi2016quantitative} applies, and an improved inequality like \eqref{stab} holds for $u(T,\cdot) = \sqrt{f(T,\cdot)/\gamma}$, after a finite time. The proof is completed by \emph{integrating backwards in time} via \cite[Lemma 2]{brigati2023stability}.

\begin{itemize}
   \item We notice first that, if $\nu = \mathrm{e}^{-(V+H)}$ is a log-Lipschitz perturbation of a strongly log-concave measure, then the ideas of \cite[Theorem 1]{brigati2023stability} apply, since \eqref{gautail} holds true.
   Alternatively, one could estimate the Poincar\'e constant of $f(t,\cdot) = \nu * \gamma_t$, for any $t\geq 0$, either via the Lipschitz transport map of Section \ref{sec3}, or by a perturbation argument \cite{cattiaux2022functional}, and apply \cite[Theorem 1]{fathi2016quantitative}. 
   Finally, one can optimise the resulting constant in \eqref{stab} over the parameter $t \geq 0$.   
   \item The scheme of proof for \cite[Theorem 1]{brigati2023stability} can be adapted to measures $\nu = u^2 \, d\gamma$ which become $\alpha$-log-concave in finite time along \eqref{heat}, for $\alpha>0$. In this case, the Poincar\'e inequality is given by the Bakry-\'Emery method \cite{bakry1985diffusions}.
   \item The same can be done for measures $\nu$ which become just log-concave ($\alpha =0$) in finite time along \eqref{heat}, provided an a priori bound on the second-order moment $\int_{\mathbb R^d} |x|^2\, \nu$, see \cite{bob-1999} and the discussion of \cite[Section 2]{brigati2023stability}.
\end{itemize}

\subsection{Structure of the paper}
The proof of Theorem \ref{thm:log-lip-pert-log-conc-become-log-conc} is given in Section \ref{sec2}, followed by Subsection \ref{subsec21}, where sufficient conditions in order to apply Theorem \ref{thm:log-lip-pert-log-conc-become-log-conc} are discussed. In Section \ref{sec3}, we detail our main application to the existence of Lipschitz transport maps, with the proof of Theorem \ref{thm3}. Finally, in Section \ref{sec4}, we prove the negative result for the creation of log-concavity in finite time, namely Theorem \ref{thm1}.

\subsection*{Acknowledgement} { 
The authors thank Professors Jean Dolbeault, Jan Maas, and Nikita Simonov for many useful comments, and Professors Kazuhiro Ishige, Asuka Takatsu, and Yair Shenfeld for inspiring interactions.
\\
This research was funded in part by the Austrian Science Fund (FWF) project \href{https://doi.org/10.55776/F65}{10.55776/F65} and by the European Union’s Horizon 2020 research and innovation programme under the Marie Sklodowska-Curie grant agreement No 101034413. 
}
\section{Log-Lipschitz perturbations of log-concave measures: proof of Theorem \ref{thm:log-lip-pert-log-conc-become-log-conc} }\label{sec2}
    Let $\mu$ be a probability measure on $\R^d$. For $t>0$ and $z\in \R^d$, define the probability measure $\mu_{z,t}$ by
    \begin{equation}
        \mu_{z,t}  \propto \exp \tond*{\frac{z\cdot x}{t} - \frac{\norma{x}^2}{2t}}\mu(x) \propto \gamma_{z, t}(x)\mu(x),
    \end{equation}
    where $\gamma_{z,t}$ is the Gaussian density with mean $z$ and covariance matrix $tI_d$.
    We will make frequent use of the following well-known probabilistic characterization of the Hessian of $\log(\mu*\gamma_t)$, cf. \cite{bar-goz-mal-zit-2018, kla-put-2021}:
    \begin{equation}\label{eq:prob-rep-log-hess}
        -\hess \log(\mu*\gamma_t)(z)  = \frac{1}{t}\tond*{I_d-\frac{\cova{\mu_{z,t}}}{t}}.  
    \end{equation}
   Consequently, bounds on $\hess \log(\mu*\gamma_t)$ are given by bounds on covariance matrices. For this purpose, we provide the following lemma, which gives an upper bound for the covariance matrix of a probability measure $\mu$ in terms of the covariance of another probability measure $\nu$ and of the Wasserstein distance between the two.
\begin{lem}\label{lem:vari-w2-close-measure}
        Let $\mu,\nu$ be probability measures on $\R^d$. For any unit vector $w \in \mathbb{S}^{d-1}$
        \begin{equation}
            \scal{w,\cova{\mu}\,  w} \le \tond*{W_2(\mu,\nu) + \sqrt{\scal{w,\cova{\nu}\,  w}}}^2.
        \end{equation}
    \end{lem}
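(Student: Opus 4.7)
The plan is to reduce the lemma to a one-dimensional triangle inequality in $L^2$, via the projection $f(x) := \langle w, x\rangle$, which is $1$-Lipschitz since $\|w\| = 1$. Writing $\sigma_w(\mu)^2 := \langle w, \operatorname{Cov}(\mu)\, w\rangle$, the point is that $\sigma_w(\mu)$ is exactly the $L^2$-standard deviation of the real-valued random variable $f(X)$ when $X \sim \mu$, and similarly for $\nu$. So the desired statement reads $\sigma_w(\mu) \le W_2(\mu,\nu) + \sigma_w(\nu)$, which is a "triangle inequality for standard deviations perturbed by $W_2$".

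First, I would fix an optimal $W_2$-coupling $(X, Y)$ of $(\mu, \nu)$, so that $\|X - Y\|_{L^2(\mathbb{P})} = W_2(\mu, \nu)$. Let $m_\nu := \mathbb{E}[Y]$, and observe the variational characterization of variance: for any constant $c \in \mathbb{R}$,
\begin{equation*}
\sigma_w(\mu) = \|f(X) - \mathbb{E}f(X)\|_{L^2} \le \|f(X) - c\|_{L^2},
\end{equation*}
since the mean is the unique $L^2$-minimizer. Choosing $c = f(m_\nu)$ (which equals $\mathbb{E} f(Y)$ by linearity of $f$), I would then split
\begin{equation*}
f(X) - f(m_\nu) \;=\; \bigl(f(X) - f(Y)\bigr) + \bigl(f(Y) - f(m_\nu)\bigr),
\end{equation*}
and apply the $L^2$ triangle inequality. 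The first summand is controlled by $\|X - Y\|_{L^2} = W_2(\mu,\nu)$ using that $f$ is $1$-Lipschitz; the second summand equals $\|f(Y) - \mathbb{E} f(Y)\|_{L^2} = \sigma_w(\nu)$.

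Putting the three ingredients together gives $\sigma_w(\mu) \le W_2(\mu,\nu) + \sigma_w(\nu)$, and squaring yields the claimed inequality. There is no substantive obstacle here: the whole argument hinges on the two elementary facts that (i) the mean minimizes the squared $L^2$-distance to a constant and (ii) $1$-Lipschitz maps are $1$-Lipschitz from $W_2$ into $L^2$ of the coupling, and the only bookkeeping is the identification of $\sigma_w$ with the standard deviation of the projection onto the direction $w$.
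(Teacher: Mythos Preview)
Your proof is correct and follows essentially the same route as the paper's: take an optimal $W_2$-coupling $(X,Y)$, use the variational characterization of the variance to replace $\mathbb{E}[f(X)]$ by $\mathbb{E}[f(Y)]$, split and apply the $L^2$ triangle inequality, then bound $\|f(X)-f(Y)\|_{L^2}\le\|X-Y\|_{L^2}=W_2(\mu,\nu)$. The only cosmetic difference is that the paper writes $X_w=\langle w,X\rangle$ in place of your $f(X)$ and labels the triangle-inequality step ``Cauchy--Schwarz''.
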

    \begin{proof}
        Let $(X,Y)$ be an optimal coupling for $W_2(\mu,\nu)$. Fix a unit vector $w\in \R^d$ and let $X_w \coloneqq \scal{w,X}$ and $Y_w \coloneqq \scal{w,Y}$. 
        We have that
        \begin{align*}
            \scal{w,\cova{\mu}\,  w} &= 
            \expe{\tond*{X_w-\expe{X_w}}^2} \\
            &\le \expe{\tond*{X_w-\expe{Y_w}}^2} = \expe{\tond*{X_w-Y_w+Y_w-\expe{Y_w}}^2}
            \\
            & \le \tond*{\sqrt{\expe{\tond*{X_w-Y_w}^2}}+\sqrt{\expe{\tond*{Y_w-\expe{Y_w}}^2}}}^2 \qquad \text{ (by Cauchy--Schwarz)}
            \\
            & \le \tond*{W_2(\mu,\nu)+\sqrt{\expe{\tond*{Y_w-\expe{Y_w}}^2}}}^2  = \tond*{W_2(\mu,\nu) + \sqrt{\scal{w,\cova{\mu}\,  w}}}^2.
        \end{align*}
    \end{proof}
    \begin{proof}[Proof of Theorem \ref{thm:log-lip-pert-log-conc-become-log-conc}]
        The upper bound in \eqref{eq:log-hess-bound-conv-log-lip-pert} is well known, and holds for arbitrary probability measures $\mu$ (cf., for example, \cite[Lemma 1.3]{eld-lee-2018}); alternatively, it follows from \eqref{eq:prob-rep-log-hess} and the fact that covariance matrices are positive semidefinite.
        Let us then turn to the first inequality. Fix $t>0$ and $z\in \R^d$. 
        Define the probability density $\nu_{z,t}\in L^1_+(\R^d)$ by
        $
            \nu_{z,t} \propto e^{-V}\gamma_{z,t}. 
        $
        Notice that $\nu_{z,t}$ is $(\alpha+\frac{1}{t})$-log-concave: therefore, $\cova{\nu_{z,t}} \preccurlyeq\frac{1}{\alpha+\frac{1}{t}}I_d$ by the Brascamp--Lieb inequality \cite{bra-lie-1976} (cf. also \cite[Lemma 5]{eld-leh-2014}).
        Moreover we have $\mu_{z,t} \propto e^{-H}\nu_{z,t}$: it follows from \cite[Corollary 2.4]{khu-maa-ped-2024} that
        \[
            W_2\tond*{\mu_{z,t},\nu_{z,t}} \le W_\infty\tond*{\mu_{z,t},\nu_{z,t}} \le \frac{L}{\alpha+\frac{1}{t}}.
        \]
        We are now in position to apply Lemma \ref{lem:vari-w2-close-measure}: for any unit vector $v\in \R^d$ we have 
        \begin{align*}
            \scal{v,\cova{\mu_{z,t}}\,  v} &\le \tond*{W_2\tond*{\mu_{z,t},\nu_{z,t}} + \sqrt{\scal{v,\cova{\nu_{z,t}}\,  v}}}^2
            \\
            &\le \tond*{\frac{L}{\alpha+\frac{1}{t}} + \sqrt{\frac{1}{\alpha+\frac{1}{t}}}}^2.
        \end{align*}
        This shows that $\cova{\mu_{z,t}} \preccurlyeq \tond*{\frac{L}{\alpha+\frac{1}{t}} + \sqrt{\frac{1}{\alpha+\frac{1}{t}}}}^2 I_d$, and the  conclusion follows from \eqref{eq:prob-rep-log-hess}.
    \end{proof}

  \begin{remark}
       In the proof of Theorem \ref{thm:log-lip-pert-log-conc-become-log-conc}, we estimated from above  $W_2\tond*{\mu_{z,t},\nu_{z,t}}$ with the $L^\infty$-Wasserstein distance $W_\infty\tond*{\mu_{z,t},\nu_{z,t}}$. 
       Alternatively, we could have achieved the same conclusion as follows, using that $\nu_{z,t}$ satisfies $\lsi\tond*{\frac{t}{\alpha t+1}}$.  First, a transport-entropy inequality \cite{ott-vil-2000} allows to estimate $W_2\tond*{\mu_{z,t},\nu_{z,t}}$ in terms of the relative entropy of $\mu_{z,t}$ with respect to $\nu_{z,t}$; then, the relative entropy is bounded from above by the relative Fisher information using the logarithmic Sobolev inequality of $\nu_{z,t}$; finally, the relative Fisher information is easily estimated using that $\mu_{z,t}\propto e^{-H}\nu_{z,t}$ and $H$ is $L$-Lipschitz.
       \end{remark}

    \subsection{Sufficient conditions}\label{subsec21}
    By Theorem \ref{thm:log-lip-pert-log-conc-become-log-conc}, log-Lipschitz perturbations of strongly log-concave measures
    become log-concave in finite time along \eqref{heat}; by Theorem \ref{thm3}, they are the pushforward of the Gaussian measure via a Lipschtzt transport map.
    The purpose of this subsection is to give sufficient conditions for a measure $\mu$ 
    to be a log-Lipschitz perturbation of a strongly log-concave measure.
    \begin{ex}\label{ex:bound-supp-conv}
        Suppose that $\mu$ is a probability measure supported on the Euclidean ball $B(0,R)$ for some radius $R>0$. Then, proceeding as in \cite{bar-goz-mal-zit-2018}, for any $s>0$ we can write 
        \[
            \mu*\gamma_s = e^{-H}\gamma_s
        \]
        where $H\colon \R^d\to \R^d$ is $\frac{R}{s}$-Lipschitz. 
        By $\frac{1}{s}$-log-concavity of $\gamma_s$, Theorem \ref{thm:log-lip-pert-log-conc-become-log-conc} applied to  $ \mu*\gamma_{s}$ and $t>0$ yields that
        \[
             \frac{1}{t}\quadr*{1-\frac{1}{t}\tond*{\frac{R/s}{1/s+\frac{1}{t}} + \sqrt{\frac{1}{1/s+\frac{1}{t}}}}^2} I_d \preccurlyeq -\hess {\log(\mu*\gamma_{t+s})} \preccurlyeq \frac{1}{t+s} I_d.
        \]
      By letting $s\to 0$, we recover the classical estimate 
       \[
            \frac{1}{t}\tond*{1-\frac{R^2}{t}}I_d \preccurlyeq -\hess \log\tond*{\mu*\gamma_t}  \preccurlyeq \frac{1}{t}I_d,
       \]
       cf. \cite[Sec. 2.1]{bar-goz-mal-zit-2018}. In this sense, we can say that the class of densities considered in Theorem \ref{thm:log-lip-pert-log-conc-become-log-conc} contains both log-concave ones (taking  $L=0$) and the ones with bounded support.
    \end{ex}
    Consider now a probability density $\mu=\eee^{-U} \in L^1_+(\R^d)$ for some $U\in C^2(\R^d)$. The following result asserts that, if we have a uniform positive lower bound for the Hessian of $U$ outside some Euclidean ball, then we can rewrite $\mu$ as a log-Lipschitz perturbation of a strongly log-concave measure.
    \begin{lem}\label{lem:pos-hess-outside-ball}
     Let
     $U\in C^2(\R^d)$ be such that for some $\alpha,\beta, R \ge0$ it holds that
    	\[
    	\begin{cases}
    		\hess U(x) \succcurlyeq \alpha I_d & \text{ if } \norma{x} \ge R,
    		\\
    		\hess U(x) \succcurlyeq -\beta I_d & \text{ if } \norma{x} < R.
    	\end{cases}
    	\]
    	Then there exists $ V, H\in C^1( \R^d)$ such that $U =  V + H$, $ V$ is $\alpha$-convex and $H$ is $2(\alpha+\beta)R$-Lipschitz.
    \end{lem}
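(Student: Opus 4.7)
My plan is to construct $H$ explicitly as a radially symmetric function that absorbs the non-convexity of $U$ inside the ball $B(0,R)$ while being globally Lipschitz, and then to set $V := U - H$. Concretely, define $\phi : [0,\infty) \to \R$ by
\[
\phi(r) := \begin{cases} -\tfrac{\alpha+\beta}{2}\,r^2 & \text{if } 0 \le r \le R,\\ -\tfrac{\alpha+\beta}{2}\,R^2 - (\alpha+\beta)\,R\,(r-R) & \text{if } r > R,\end{cases}
\]
and set $H(x) := \phi(\norma{x})$. By construction $\phi \in C^1([0,\infty))$ with $\phi'(0)=0$ and $\sup_{r \ge 0} |\phi'(r)| = (\alpha+\beta)R$, so $H \in C^1(\R^d)$ is $(\alpha+\beta)R$-Lipschitz, in fact a factor of two stronger than what is required.

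Verifying that $V := U - H$ is $\alpha$-convex splits into two regions, and reduces to computing $\hess H$ away from the sphere $\{\norma{x}=R\}$. Inside the ball, $H(x)=-\tfrac{\alpha+\beta}{2}\norma{x}^2$, so $\hess H = -(\alpha+\beta)I_d$, and the hypothesis $\hess U \succcurlyeq -\beta I_d$ gives $\hess V = \hess U + (\alpha+\beta) I_d \succcurlyeq \alpha I_d$. Outside the ball, a direct computation shows that
\[
\hess H(x) = -\frac{(\alpha+\beta)R}{\norma{x}}\bigl(I_d - x x^\top/\norma{x}^2\bigr)
\]
has eigenvalues $0$ (radial direction) and $-(\alpha+\beta)R/\norma{x}$ (tangential directions), all non-positive, so $\hess V \succcurlyeq \hess U \succcurlyeq \alpha I_d$ by the outer hypothesis on $\hess U$.

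The one delicate point I anticipate is upgrading the a.e.\ Hessian lower bound to $\alpha$-convexity in the convex-analytic sense, since $H$ (and hence $V$) fails to be $C^2$ on the sphere $\{\norma{x}=R\}$. I would handle this by observing that $\nabla H$ is $(\alpha+\beta)$-Lipschitz globally, so $H \in C^{1,1}(\R^d)$ and $V \in C^{1,1}_{\mathrm{loc}}(\R^d)$; mollifying $V$ produces a sequence of smooth $\alpha$-convex functions converging in $C^1_{\mathrm{loc}}$, which transfers the inequality $V(y) \ge V(x) + \nabla V(x)\cdot(y-x) + \tfrac{\alpha}{2}\norma{y-x}^2$ to $V$ itself in the limit.
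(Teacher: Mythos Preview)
Your proposal is correct and follows essentially the same construction as the paper: define $H$ radially as a negative quadratic on $B(0,R)$ extended affinely in $\norma{x}$ outside, and set $V=U-H$. Your choice of coefficient $\tfrac{\alpha+\beta}{2}$ (versus the paper's $\alpha+\beta$) actually yields the sharper Lipschitz constant $(\alpha+\beta)R$, and your handling of the $C^{1,1}$ regularity at the sphere $\{\norma{x}=R\}$ via mollification is more careful than the paper's one-line assertion.
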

    \begin{proof}
         Let $ H\colon \R^d \to \R$ be defined by 
    	\[
    	- H(x) = 
    		\begin{cases}
    			(\alpha+\beta)\norma*{x}^2 & \text{ if } \norma{x}\le R,
    			\\
    			2(\alpha+\beta)R\norma{x}-
       {2}(\alpha+\beta)R^2 & \text{ if } \norma{x} \ge R,
    		\end{cases}
    	\]
    	and set $V(x) = U(x)- H(x)$. 
    	Then we have that $U =  V + H$, $ V \in C^1(\R^d)$ is $\alpha$-convex   and $\norma*{\nabla H} \le 2(\alpha +\beta )R$, as desired.
    \end{proof}

    The above lemma can be useful to study linear combinations of strongly log-concave densities, via the following 
\begin{prop}\label{prop5}
    Consider a measure $\mu = \sum_{i=1}^N \alpha_i \, \mathrm{e}^{-U_i}$   for some $N>0$, weights $\alpha_i> 0$ and potentials $U_i\in C^2(\R^d)$ such that $\eee^{-U_i}\in L^1_+(\R^d)$. 
    Assume $\nabla^2 U_i \succcurlyeq K  I_d$ for all $i$ and some $K>0$. 
    Then 
    \begin{align}\label{eq:bound-log-hess-mixture}
              - \nabla^2  \log \, \mu & \succcurlyeq 
       K I_d - \frac{\sum_{i>j} \alpha_i\alpha_j \eee^{-U_i-U_j}  (\nabla U_i - \nabla U_j)^{\otimes 2} }{\mu^2}  \\
       \label{eq:crude-bound-log-hess-mixture}
      & \succcurlyeq K I_d - \sum_{i>j} \frac{(\nabla U_i - \nabla U_j)^{\otimes 2}}{ \left(2 + \frac{\alpha_i}{\alpha_j}\mathrm{e}^{U_j-U_i} + \frac{\alpha_j}{\alpha_i}\mathrm{e}^{U_i-U_j}\right)}.
    \end{align}
   \end{prop}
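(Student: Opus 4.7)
The plan is to compute $\nabla^2 \log \mu$ directly and then identify the correction term as a covariance matrix. Setting $\mu = \sum_i \alpha_i \eee^{-U_i}$ and differentiating twice, I would obtain
\[
    \nabla^2 \log \mu = \frac{\nabla^2\mu}{\mu} - \frac{(\nabla \mu)^{\otimes 2}}{\mu^2}
    = -\frac{\sum_i \alpha_i \eee^{-U_i}\nabla^2 U_i}{\mu} + \frac{\sum_i \alpha_i \eee^{-U_i}(\nabla U_i)^{\otimes 2}}{\mu} - \frac{(\nabla\mu)^{\otimes 2}}{\mu^2}.
\]
Using the lower bound $\nabla^2 U_i \succcurlyeq K I_d$ on the first term and the fact that the weights $p_i := \alpha_i \eee^{-U_i}/\mu$ form a probability distribution on $\{1,\dots,N\}$, this gives
\[
    -\nabla^2 \log \mu \succcurlyeq K I_d - \left[\sum_i p_i (\nabla U_i)^{\otimes 2} - \Bigl(\sum_i p_i \nabla U_i\Bigr)^{\otimes 2}\right],
\]
where the bracket is the covariance matrix of the random vector $i \mapsto \nabla U_i$ under $p$.

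Next, I would rewrite this covariance as a pairwise sum via the standard identity
\[
    \sum_i p_i g_i g_i^T - \Bigl(\sum_i p_i g_i\Bigr)^{\otimes 2} = \sum_{i>j} p_i p_j (g_i - g_j)^{\otimes 2},
\]
which is straightforward by expanding $\frac12 \sum_{i,j} p_i p_j (g_i-g_j)^{\otimes 2}$. Substituting back $p_i p_j = \alpha_i\alpha_j \eee^{-U_i-U_j}/\mu^2$ yields the first inequality \eqref{eq:bound-log-hess-mixture}.

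For the second inequality \eqref{eq:crude-bound-log-hess-mixture}, I would use the crude pointwise bound $\mu \ge \alpha_i \eee^{-U_i} + \alpha_j \eee^{-U_j}$ (all omitted terms being nonnegative), so that
\[
    \mu^2 \ge \alpha_i^2\eee^{-2U_i} + 2\alpha_i\alpha_j \eee^{-U_i-U_j} + \alpha_j^2\eee^{-2U_j}.
\]
Dividing numerator and denominator of each summand in \eqref{eq:bound-log-hess-mixture} by $\alpha_i\alpha_j \eee^{-U_i-U_j}$ converts this into the stated factor $\bigl(2 + \tfrac{\alpha_i}{\alpha_j}\eee^{U_j-U_i} + \tfrac{\alpha_j}{\alpha_i}\eee^{U_i-U_j}\bigr)$ in the denominator, completing the proof.

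No real obstacle is anticipated: the argument is a direct second-order computation combined with the variance rewriting. The only subtlety worth highlighting is that the first inequality is tight up to the structure of the mixture (it recovers the full covariance of the gradients weighted by the mixture weights), while passing to the second requires the pairwise relaxation $\mu \ge \alpha_i\eee^{-U_i}+\alpha_j\eee^{-U_j}$, which decouples the pair $(i,j)$ from the remaining components at the cost of some sharpness.
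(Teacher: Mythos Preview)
Your proposal is correct and follows essentially the same route as the paper: a direct computation of $-\nabla^2\log\mu$, the bound $\nabla^2 U_i\succcurlyeq K I_d$ on the diagonal part, the covariance identity to rewrite the cross terms as $\sum_{i>j}p_ip_j(\nabla U_i-\nabla U_j)^{\otimes 2}$, and then the crude estimate $\mu^2\ge(\alpha_i\eee^{-U_i}+\alpha_j\eee^{-U_j})^2$ for the second inequality. The only cosmetic difference is that you name the covariance explicitly, whereas the paper carries the sums through before collapsing them.
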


\begin{proof}

Notice that
$$- \nabla^2  \log \, \mu = \mu^{-2} (\nabla \mu \otimes \nabla \mu - \mu \nabla^2 \mu).$$ Set $\mu_i :=\alpha_i \mathrm{e}^{-U_i}$ so that $\mu = \sum_{i=1}^N \mu_i$.
By construction     
$$\nabla \mu_i = - \nabla U_i \, \mu_i, \qquad \nabla^2 \mu_i = (- \nabla^2 U_i + \nabla U_i \otimes \nabla U_i) \mu_i, \quad \forall i=1,\ldots,N. $$
Then,     
        \begin{align*}
      - \nabla^2  \log \, \mu &= 
      \frac{\left( \sum_{i=1}^N \nabla U_i \, \mu_i \right)^{\otimes 2} - \left( \sum_{i=1}^N \mu_i \right) \, \left( \sum_{i=1}^N (- \nabla^2 U_i + \nabla U_i \otimes \nabla U_i) \mu_i \right) }{\mu^2}  \\
      &= \frac{\mu \sum_{i=1}^N \nabla^2 U_i \, \mu_i - \sum_{i,j=1}^N \mu_i \, \mu_j (\nabla U_i \otimes \nabla U_j - \nabla U_j \otimes \nabla U_j) }{\mu^2}  \\
      & \succcurlyeq K \, I_d- \frac{\sum_{i>j} \mu_i \mu_j (\nabla U_i - \nabla U_j)^{\otimes 2} }{\mu^2},
    \end{align*}       
    which shows \eqref{eq:bound-log-hess-mixture}.
    The crude estimate  
    \[
    \mu^2 = \sum_{l,m=1}^N  \mu_l \, \mu_m \geq 2 \mu_i \, \mu_j + \mu_i^2 + \, \mu_j^2 \quad \text{ for } i\neq j \]
    then gives \eqref{eq:crude-bound-log-hess-mixture}.
\end{proof}			
From the above proposition, it is clear that when the right-hand-side of \eqref{eq:bound-log-hess-mixture} is uniformly positive definite outside a Euclidean ball, then by Lemma \ref{lem:pos-hess-outside-ball} we can recast $\mu$ as a log-Lipschitz perturbation of a strongly log-concave measure. Therefore, the assumptions of Theorem \ref{thm:log-lip-pert-log-conc-become-log-conc} are satisfied, and $\mu*\gamma_t$  becomes strongly log-concave in finite time along the heat flow \eqref{heat}.  We illustrate this in the following example, where $\mu$ is a finite mixture of Gaussians in dimension $1$.

\begin{ex} 
    Let $\mu$ be a linear combination of one-dimensional Gaussians, i.e. $\mu = \sum_{i=1}^N \alpha_i \, \mathrm{e}^{-U_i}$   for some $N\ge 2$, weights $\alpha_i> 0$ and potentials $U_i$ of the form
    \[
       U_i(x) = \frac{(x-m_i)^2}{\sigma_i^2} 
    \]
    for some $m_i\in \R, \sigma_i^2>0$. 
    Without loss of generality we can assume that $U_i \neq U_j$ for $i\neq j$. 
    By Proposition \ref{prop5}, we have that 
    \begin{align*}
        -\frac{d^2}{dx^2} \log \mu \succcurlyeq  \frac{1}{\max_i \sigma_i^2} - \sum_{i>j} \frac{( U_i' -  U_j')^{2}}{ \left(2 + \frac{\alpha_i}{\alpha_j}\mathrm{e}^{U_j-U_i} + \frac{\alpha_j}{\alpha_i}\mathrm{e}^{U_i-U_j}\right)}.
    \end{align*}
    It is then not difficult to see that the argument of the sum in the right-hand-side converges to $0$ as $\abs*{x}\to \infty$. By the previous discussion, it follows that the assumptions of Theorem \ref{thm:log-lip-pert-log-conc-become-log-conc} are satisfied for some $L, \alpha >0$: hence, a finite linear combination of Gaussian densities on $\R$ becomes strongly log-concave in finite time along the heat flow.
\end{ex}

\section{Lipschitz transport maps: proof of Theorem \ref{thm3}}\label{sec3}

\paragraph{Construction of the heat-flow map.}
Let $\mu\in L^1_+(\R^d)$ be a probability density on $\R^d$.
Assume, furthermore, that $\mu$ has finite second-order moment. We begin by sketching the construction of the heat-flow map, and refer the reader to \cite{kim-mil-2012, mik-she-2023} for details.
The idea  is to interpolate between $\mu$ and $\gamma$ along the Ornstein--Uhlenbeck flow
\begin{equation}\label{eq:OU}
    X_0\sim \mu, \qquad  dX_t = -X_t dt+\sqrt{2}dB_t.
\end{equation}
Let us denote  by $Q_t$ the associated transition semigroup \eqref{eq:ou-semigroup} and by $\mu_t$ the law of $X_t$.
Then, $\mu_t$ satisfies the Fokker--Planck equation
\[
    \partial \mu_t -\dive\quadr*{\mu_t\nabla \oulo} = 0.
\]
Correspondingly, we can consider  the flow maps $(S_t)_{t\ge 0}$ obtained by solving 
\begin{align*}
    S_0(x) = x, \quad \ddt S_t(x) = -\nabla \oulo
\end{align*}
for all $x\in \R^d$.  Under some regularity assumptions (cf. \cite{kim-mil-2012,mik-she-2023,ott-vil-2000,vil-2003}), this defines a flow of diffeomorphisms such that $S_t\#\mu = \mu_t$; conversely, $T_t\coloneqq S_t^{-1}$ is such that $T_t\#\mu_t = \mu$. The heat-flow map is then heuristically defined by $T^{\text{flow}} = \lim_{t\to \infty} T_t$ and is such that $T^{\text{flow}}\#\gamma = \mu$.
To make things rigorous, we recall/adapt the following result from \cite{mik-she-2023}.
\begin{lem}\label{lem:tec-exis-lip-map}
    Suppose that $\mu\in L^1_+(\R^d)$ is a probability density with finite second-order moment. Suppose, furthermore, that for all $t>0$ there exist $\themaxt,\themint\in\R$ such that 
    \begin{align}
        \themint I_d \preccurlyeq \hess \oulo \preccurlyeq \themaxt I_d
    \end{align}
    and for all $s>1$
    \[
        \sup_{\frac{1}{s} <t<s} \max\graf*{\abs{\themint},\abs{\themaxt}} <\infty.
    \]
    Then, provided that $L\coloneqq \limsup_{t\to\infty}\int_{\frac{1}{t}}^t \themaxt dt<\infty$, there exists a  map $T\colon \R^d\to\R^d$ such that $T\#\gamma =\mu$ and $T$ is $e^L$-Lipschitz.
\end{lem}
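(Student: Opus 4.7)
The plan is to realize $T$ as a locally uniform limit of inverse Ornstein--Uhlenbeck flow maps, with the Lipschitz constant controlled by a matrix Grönwall estimate on the Jacobian. For each $s > 0$, I will first construct the flow $(S^{(s)}_r)_{r \ge s}$ as the solution of
\[
S^{(s)}_s(x) = x, \qquad \frac{d}{dr} S^{(s)}_r(x) = -\nabla \log Q_r\bigl(d\mu/d\gamma\bigr)\bigl(S^{(s)}_r(x)\bigr),
\]
which is well posed by the local uniform bounds on $\max\{|\theta_r^{\min}|, |\theta_r^{\max}|\}$ on compact subintervals of $(0,\infty)$: these supply the local Lipschitz regularity in space and local boundedness in time of the driving vector field needed to run Cauchy--Lipschitz. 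The flow pushes $\mu_s$ onto $\mu_r$ for every $r \ge s$, and its inverse $T^{(s)}_r := (S^{(s)}_r)^{-1}$ transports $\mu_r$ back to $\mu_s$.

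Next, I will estimate the Jacobian $J_r := \nabla S^{(s)}_r$, which satisfies the linear matrix ODE $\frac{d}{dr} J_r = -\hess \log Q_r(d\mu/d\gamma)(S^{(s)}_r)\, J_r$ with $J_s = I_d$. For any unit vector $v$,
\[
\frac{d}{dr}\|J_r v\|^2 = -2 \langle J_r v,\, \hess \log Q_r(d\mu/d\gamma)(S^{(s)}_r)\, J_r v\rangle,
\]
and the hypothesis $\theta_r^{\min} I_d \preccurlyeq \hess \log Q_r(d\mu/d\gamma) \preccurlyeq \theta_r^{\max} I_d$ combined with Grönwall yields $\sigma_{\min}(J_t) \ge \exp\bigl(-\int_s^t \theta_r^{\max}\, dr\bigr)$. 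Inverting, $T^{(s)}_t$ has operator-norm of the differential bounded by $\exp\bigl(\int_s^t \theta_r^{\max}\, dr\bigr)$, hence is Lipschitz with this constant.

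Finally, I will pass to the limit along a sequence $n_k \to \infty$ realising the $\limsup$, so that $\int_{1/n_k}^{n_k} \theta_r^{\max}\, dr \to L$. The maps $T_k := T^{(1/n_k)}_{n_k}$ push $\mu_{n_k}$ onto $\mu_{1/n_k}$ and are uniformly $e^{L + o(1)}$-Lipschitz; the finite second moment of $\mu$, preserved up to a uniform constant under $(Q_r)_r$, yields that $\int |y|^2\, d\mu_{n_k}(y)$ is uniformly bounded, and combining this with the Lipschitz bound together with the pushforward identity pins down $|T_k(0)|$ uniformly in $k$. Arzelà--Ascoli then provides a locally uniform subsequential limit $T$ that is $e^L$-Lipschitz, and testing against continuous bounded functions, together with $\mu_{n_k} \rightharpoonup \gamma$ and $\mu_{1/n_k} \rightharpoonup \mu$, transfers the pushforward relation to $T \# \gamma = \mu$. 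The main obstacle will be this compactness step: without anchoring via the second moment, a uniformly Lipschitz sequence of maps on $\R^d$ can drift to infinity, so the finite-moment hypothesis is indispensable; by contrast, both the construction of the flow and the Jacobian ODE estimate are routine once the regularity assumptions on $\theta_r^{\min}, \theta_r^{\max}$ are in place.
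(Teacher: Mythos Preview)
Your argument is correct and follows the same strategy as the paper: build the OU flow map on a finite time window $[1/s,s]$, bound the Lipschitz constant of its inverse via a Gr\"onwall estimate on the Jacobian, then pass to the limit $s\to\infty$ by compactness. The paper's proof delegates these three steps to Lemmas~1--3 of \cite{mik-she-2023}, whereas you unpack them explicitly; in particular your second-moment anchoring is precisely the content of their compactness Lemma~1.
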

\begin{proof}
    Notice first of all that $\mu_t$ is a smooth density for every $t>0$. Fix $s>0$: by the assumptions in the Lemma and by \cite[Lemma 2 and 3]{mik-she-2023} there exists a map $T_s$ which is $\exp\tond*{\int_{\frac{1}{s}}^s\themaxt \,dt}$-Lipschitz and such that $T_s\#\mu_s = \mu_{\frac{1}{s}}$. Since $\mu_s\to\gamma$ and $\mu_{\frac{1}{s}}\to \mu$ in $W_2$-distance (hence weakly) as $s\to \infty$, the  conclusion follows from \cite[Lemma 1]{mik-she-2023}.
\end{proof}

\paragraph{New estimates.}
In view of Lemma \ref{lem:tec-exis-lip-map}, the goal is to provide estimates on $\hess \oulo$, for some classes of probability measures $\mu$ on $\R^d$.
The Ornstein--Uhlenbeck semigroup $Q_t$ is related to the heat semigroup $P_t$ in \eqref{eq:heat-semigroup} by the  identity $Q_t f(x) = P_{1-e^{-2t}}f(e^{-t}x)$
for  $f\in L^1(\gamma)$. Combining this with
Theorem \ref{thm:log-lip-pert-log-conc-become-log-conc} yields the following
\begin{cor}[Corollary of Thm. \ref{thm:log-lip-pert-log-conc-become-log-conc}]\label{cor:ou-log-hess}
    Let $\mu = e^{-V-H}\in L^1_+(\R^d)$ be a probability density on $\R^d$ such that $V$ is $\alpha$-convex and $H$ is $L$-Lipschitz, for some $\alpha,\in \R,  L\ge 0$. 
    Then for every $0<t$ such that $\alpha t+1>0$ we have
\begin{equation}
\begin{split}
    \label{eq:ou-estimates}
    -\frac{1}{{e}^{2t}-1} \, I_d
    \preccurlyeq&
    \hess \oulo  
    \\
     \preccurlyeq &
     \tond*{\frac{1-\alpha}{\alpha\tond*{e^{2t}-1}+1} + \frac{e^{2t}L^2}{(\alpha(e^{2t}-1)+1)^2} +  \frac{2Le^{2t}}{\sqrt{\tond*{e^{2t}-1}} \, (\alpha\tond*{e^{2t}-1}+1)^{3/2}} } \, I_d.
    \end{split}
    \end{equation}
\end{cor}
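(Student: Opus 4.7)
The plan is to reduce the statement to Theorem \ref{thm:log-lip-pert-log-conc-become-log-conc} via a scaling argument applied to the Ornstein--Uhlenbeck dynamics. First, I would identify $Q_t(d\mu/d\gamma) = d\mu_t/d\gamma$, where $\mu_t$ denotes the Lebesgue density of $X_t$ for the process \eqref{eq:OU} initialised at $\mu$; taking logarithms and $\hess$ on both sides yields
\[
    \hess \oulo = \hess \log \mu_t + I_d,
\]
so it suffices to control $\hess\log\mu_t$ and then translate the bound by $I_d$.

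Second, using the representation $X_t \stackrel{d}{=} \eee^{-t}X_0 + \sqrt{1-\eee^{-2t}}\, Z$ with $Z\sim\gamma$ independent of $X_0\sim\mu$, I would rewrite $\mu_t = \tilde \mu * \gamma_s$ with $s \coloneqq 1-\eee^{-2t}$, where $\tilde \mu$ is the pushforward of $\mu$ under $x\mapsto \eee^{-t}x$. Explicitly, $\tilde\mu = \eee^{-\tilde V - \tilde H}$ with $\tilde V(u) \coloneqq V(\eee^t u)-dt$ and $\tilde H(u)\coloneqq H(\eee^t u)$; a direct chain-rule computation shows that $\tilde V$ is $\alpha \eee^{2t}$-convex and $\tilde H$ is $L\eee^t$-Lipschitz.

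Third, I would apply Theorem \ref{thm:log-lip-pert-log-conc-become-log-conc} to $\tilde \mu$ at time $s = 1-\eee^{-2t}$ with effective parameters $\tilde\alpha = \alpha\eee^{2t}$ and $\tilde L = L\eee^t$ (noting that the validity condition $\tilde\alpha s + 1 > 0$ reduces to $\alpha(\eee^{2t}-1)+1>0$). The resulting two-sided bound on $-\hess\log(\tilde\mu*\gamma_s) = -\hess\log\mu_t$, combined with the identity from Step~1, yields the two-sided bound for $\hess\oulo$ claimed in \eqref{eq:ou-estimates}.

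The only remaining work is the final algebraic bookkeeping, which I expect to be the sole mildly delicate step: the quantities $1-1/s$, $1/\bigl(s^2(\tilde\alpha+1/s)\bigr)$, $\tilde L^2/\bigl(s(\tilde\alpha+1/s)\bigr)^2$, and $\tilde L/\bigl(s^{1/2}(s\tilde\alpha+1)^{3/2}\bigr)$ must each be expressed in closed form using the identities $s\tilde\alpha + 1 = \alpha(\eee^{2t}-1)+1$ and $\sqrt{s} = \sqrt{\eee^{2t}-1}/\eee^t$. This produces the lower bound $-1/(\eee^{2t}-1)$ (where the constant $+I_d$ cancels neatly with $-1/s$) and the three summands $(1-\alpha)/(\alpha(\eee^{2t}-1)+1)$, $\eee^{2t}L^2/(\alpha(\eee^{2t}-1)+1)^2$, and $2L\eee^{2t}/[\sqrt{\eee^{2t}-1}\,(\alpha(\eee^{2t}-1)+1)^{3/2}]$ in the upper bound, matching \eqref{eq:ou-estimates} exactly.
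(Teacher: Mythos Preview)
Your proposal is correct and follows essentially the same approach as the paper: reduce to Theorem~\ref{thm:log-lip-pert-log-conc-become-log-conc} via the relationship between the Ornstein--Uhlenbeck and heat semigroups. The paper states only the identity $Q_t f(x) = P_{1-e^{-2t}}f(e^{-t}x)$ applied to $f=d\mu/d\gamma$ and leaves the bookkeeping implicit; your route via $\mu_t=\tilde\mu*\gamma_s$ with the rescaled density $\tilde\mu$ (parameters $\tilde\alpha=\alpha e^{2t}$, $\tilde L=Le^t$) is an equivalent unpacking of the same identity, and the algebra you outline indeed recovers \eqref{eq:ou-estimates} exactly.
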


\begin{proof}[Proof of Theorem \ref{thm3}]
We integrate the upper bound in \eqref{eq:ou-estimates}. An elementary computation using the change of variable $\tau = e^{2t}-1$  shows that

\begin{align*}
&\int_0^\infty \tond*{\frac{1-\alpha}{\alpha\tond*{e^{2t}-1}+1} + \frac{e^{2t}L^2}{(\alpha(e^{2t}-1)+1)^2} +  \frac{2Le^{2t}}{\sqrt{\tond*{e^{2t}-1}} \, (\alpha\tond*{e^{2t}-1}+1)^{3/2}} } \,dt
\\
= \,&  \int_0^\infty  \tond*{  \frac{1-\alpha}{\tau\alpha+1} + L^2 \frac{\tau+1}{(\tau\alpha+1)^2} +2L \, \frac{\tau+1}{\sqrt{\tau} \, (\tau\alpha+1)^{3/2}} } \frac{1}{2(\tau+1)}d\tau
\\
= \,& -\frac{1}{2} \log(\alpha) +\frac{L^2}{2\alpha} + 2\frac{L}{\sqrt{\alpha}}.
\end{align*}
The desired conclusion then follows from Lemma \ref{lem:tec-exis-lip-map}.
\end{proof}

\section{The negative result: proof of Theorem \ref{thm1}}\label{sec4}

Before proving the actual theorem, we give some heuristics behind the proof. 
The leading idea is the following. 
If one considers \eqref{heat} with $\mu = \delta_0$, then the solution is immediately log-concave for $t>0.$
However, this behaviour is not stable.

\begin{prop}\label{prop1}
   Fix $x_0 \in \R$. Let $\mu = \frac{\alpha}{\alpha+\beta}\delta_0 + \frac{\beta}{\alpha+\beta}\delta_{x_0}$, for some $\alpha,\beta>0$. Then, $\mu*\gamma_t$ is log-concave (if and) only if $t\ge \frac{1}{4} x_0^2$.
\end{prop}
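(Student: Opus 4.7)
The plan is to compute $-(\log(\mu * \gamma_t))''$ explicitly, identify its minimum over $x \in \R$, and check the sign of that minimum.

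First, set $\mu_1(x) := \frac{\alpha}{\alpha+\beta}\gamma_t(x)$ and $\mu_2(x) := \frac{\beta}{\alpha+\beta}\gamma_t(x-x_0)$, so that $\mu * \gamma_t = \mu_1 + \mu_2$, and denote by $U_i := -\log \mu_i$ the associated potentials. Since $U_1'' = U_2'' = \frac{1}{t}$ and $U_1' - U_2' = \frac{x_0}{t}$, a direct computation (alternatively: specializing Proposition \ref{prop5} to two components, where the crude bound is actually an equality because $\mu^2 = (\mu_1+\mu_2)^2$) gives
\begin{equation*}
    -\frac{d^2}{dx^2} \log(\mu * \gamma_t)(x) \;=\; \frac{1}{t} \;-\; \frac{x_0^2}{t^2}\,\frac{\mu_1(x)\mu_2(x)}{(\mu_1(x)+\mu_2(x))^2}.
\end{equation*}

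Next, I would analyze the ratio $r(x) := \mu_1(x)\mu_2(x)/(\mu_1(x)+\mu_2(x))^2$. Setting $s := \mu_1(x)/\mu_2(x) \in (0,\infty)$, we have $r(x) = s/(s+1)^2$, which is bounded above by $\tfrac14$ with equality iff $s = 1$, i.e.\ iff $\mu_1(x) = \mu_2(x)$. The equation $\mu_1(x) = \mu_2(x)$ reduces to an affine equation in $x$ and therefore admits a (unique) real solution $x^* \in \R$, regardless of $\alpha,\beta > 0$. At this point $r(x^*) = \tfrac14$, while $r(x) < \tfrac14$ elsewhere.

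Combining the two steps, the infimum of $-(\log(\mu * \gamma_t))''$ over $\R$ equals
\begin{equation*}
    \frac{1}{t} - \frac{x_0^2}{4t^2} \;=\; \frac{4t - x_0^2}{4t^2},
\end{equation*}
and it is attained at $x^*$. Therefore $\mu * \gamma_t$ is log-concave if and only if $4t - x_0^2 \ge 0$, i.e.\ $t \ge \tfrac14 x_0^2$, as claimed.

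No step is particularly delicate here: the only minor obstacle is justifying that the identity for $-(\log(\mu*\gamma_t))''$ holds with equality (not just as a lower bound as in Proposition \ref{prop5}); this is immediate since for two components $\mu^2 = 2\mu_1\mu_2 + \mu_1^2 + \mu_2^2$ has no additional cross terms to discard.
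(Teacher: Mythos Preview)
Your proof is correct and essentially equivalent to the paper's. The paper uses the covariance representation \eqref{eq:prob-rep-log-hess} --- at the balanced point $\bar z$ where the two tilted weights coincide, the variance of the two-point measure $\mu_{\bar z,t}$ equals $x_0^2/4$, giving $-(\log(\mu*\gamma_t))''(\bar z) = \frac{1}{t}(1 - \frac{x_0^2}{4t})$ --- whereas you compute $-(\log(\mu*\gamma_t))''$ directly from the two-term sum and maximize $\mu_1\mu_2/(\mu_1+\mu_2)^2$. These are the same computation in different guises: your ratio $r(x)$ is precisely $p(1-p)$ for the tilted weights, and its maximum $1/4$ is the maximal variance (divided by $x_0^2$) of a two-point law on $\{0,x_0\}$.

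One small advantage of your route is that it handles both implications at once: the paper only proves the ``only if'' direction and invokes the bounded-support estimate \eqref{eq:log-hess-bound-supp} for the ``if'' direction, whereas your explicit formula for the global minimum gives log-concavity for $t\ge x_0^2/4$ directly. A tiny quibble: your claim that $\mu_1(x)=\mu_2(x)$ always has a real solution tacitly assumes $x_0\neq 0$ (otherwise the equation degenerates), but in that degenerate case the correction term vanishes identically and the conclusion is trivial.
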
 
\begin{proof}
    We prove only the \emph{only if} part, since the other implication follows directly from \eqref{eq:log-hess-bound-supp}.  
    It is not difficult to see that with $x_0,t,\alpha, \beta>0$ fixed, there exists $\bar z\in \R$ for which 
    \[
         \alpha e^{-\frac{\bar z^2}{2t}} = \beta e^{-\frac{(\bar z-x_0)^2}{2t}}.
    \]
    Then, using \eqref{eq:prob-rep-log-hess}, we have that
    \[
         \frac{d^2}{dx^2} \tond*{-\log \mu*\gamma_t} (\bar z) =  \frac{1}{t }\tond*{1 - \frac{x_0^2}{4t}},
    \]
    which is negative if $t<x_0^2/4$.
\end{proof}

From equation \eqref{eq:log-hess-bound-supp} we see
that a compactly-supported distribution becomes log-concave along \eqref{heat} after a time $T=O(R^2)$. Proposition \ref{prop1} gives a simple account of this time scale being correct. 
In addition, we see that the time needed for the measure $\mu$ of Proposition \ref{prop1}  to become log-concave along \eqref{heat} does not depend on the mass of the perturbation $\delta_{x_0}$. Exploiting these observations allows us to create mixtures of Dirac deltas with arbitrarily thin tails, which never become log-concave along \eqref{heat}.

\begin{proof}[Proof of Theorem \ref{thm1}]   
    For $i\ge 0$, set $x_i = \frac{i(i+1)}{2} \ge 0$. 
    Define the probability measure $\mu$ on $\R$ by
    \[
        \mu \propto \sum_{i=0}^\infty \frac{1}{(i+1)^2}e^{-\Psi(x_i)}\delta_{x_i}
    \]
    and let $X\sim \mu$. It is immediate to check that $\expe{e^{\Psi(X)}}<\infty$.
    Let us now fix $t\ge 0$.
    Recall from \eqref{eq:prob-rep-log-hess} that 
    \[
        -\frac{d^2}{dx^2}\log \mu*\gamma_t(z) =  \frac{1}{t}\tond*{1-\frac{\vari{\mu_{z,t}}}{t}},
    \]
    where
    \[
        \mu_{z,t}(x) \propto e^{\frac{zx}{t}-\frac{x^2}{2t}}\mu(x) \propto \sum_{i=0}^\infty \frac{1}{(i+1)^2}e^{-\Psi(x_i)+\frac{zx_i}{t}-\frac{x_i^2}{2t}}\delta_{x_i}.
    \]
    Therefore, it suffices to prove that, for every $M>0$, there exists $z$ such that $\vari{\mu_{z,t}} \ge M^2$.
    To this end, fix $M$ and choose $j\ge\sqrt{2}M$ so that 
    \[
        \abs*{x_{j}-x_{j-1}}^2 = j^2 \ge 2M^2.
    \]
    To conclude, it suffices to show that there exists $z\in \R$ such that
    \begin{equation}\label{eq:equal-weight}
        \mu_{z,t}\tond*{[0,x_{j-1}]} = \frac{1}{2} = \mu_{z,t}\tond*{[x_{j}, +\infty]}.
    \end{equation}
    Indeed, the above implies that $\vari{\mu_{z,t}}\ge M^2$. 
    Notice now that \eqref{eq:equal-weight}
    is equivalent to finding a solution to the equation $F(z)=0$, where 
    \begin{align}\label{eq:def-F}
        F(z) = \sum_{i=0}^{j-1} \frac{1}{(i+1)^2}e^{-\Psi(x_i)+\frac{zx_i}{t}-\frac{x_i^2}{2t}} - \sum_{i=j}^\infty  \frac{1}{(i+1)^2}e^{-\Psi(x_i)+\frac{zx_i}{t}-\frac{x_i^2}{2t}}.  
    \end{align}
    It is straightforward to check that $F(0)\ge 0$, e.g. using that $1>\sum_{i=1}^\infty \frac{1}{(i+1)^2}$ and that $\Psi$ is non-decreasing.
    Moreover, $F$ is continuous, since for any compact interval $[a,b]\subset\R$, the series in \eqref{eq:def-F} converges uniformly in $C([a,b])$.
    To conclude, we show now that $\lim_{z\to \infty} F(z) = -\infty$. To this end, notice that 
    \begin{align*}
        F(z)  & \le  je^{-\Psi(0)+\frac{zx_{j-1}}{t}} - \frac{1}{(j+1)^2}e^{-\Psi(x_j)-\frac{x_j^2}{2t}+\frac{zx_j}{t}}
        \\
        & =  e^{\frac{zx_{j-1}}{t}} \tond*{je^{-\Psi(0)} - \frac{1}{(j+1)^2}e^{-\Psi(x_j)-\frac{x_j^2}{2t}}e^{\frac{zj}{t}}} , 
    \end{align*}
    which yields the desired conclusion since $j>0$. 
\end{proof}

\printbibliography

\end{document}